\newtheorem{thm}{Theorem}[section]
\newtheorem*{thm*}{Theorem}
\newtheorem{lem}[thm]{Lemma}
\newtheorem*{lem*}{Lemma}
\newtheorem{mainthm}{Theorem}
\newtheorem*{mainthm*}{Theorem}
\theoremstyle{definition}
\newtheorem*{case*}{Case}
\newtheorem{conj}[thm]{Conjecture}
\newtheorem{defn}[thm]{Definition}
\newtheorem*{defn*}{Definition}
\newtheorem{exmp}[thm]{Example}
\newtheorem*{exmp*}{Example}
\renewcommand{\thestep}{}
\theoremstyle{remark}
\renewcommand{\thecase}{}
\newtheorem{rmk}[thm]{Remark}
\newtheorem*{rmk*}{Remark}
\def\alphenumi{
  \def\theenumi{\alph{enumi}}
  \def\p@enumi{\theenumi}
  \def\labelenumi{(\@alph\c@enumi)}}
\def\thecase{\@arabic\c@case}
\def\thestep{\@arabic\c@step}
\def\hhmm{\number\hh:\ifnum\mm<10{}0\fi\number\mm}
\let\oldmarginpar\marginpar
\renewcommand\marginpar[1]{\-\oldmarginpar[\raggedleft\footnotesize #1]%
{\raggedright\footnotesize #1}}
\newcommand\RR{\mathbb{R}}
\newcommand\sA{{\mathscr{A}}}
\newcommand\sD{{\mathscr{D}}}
\newcommand\sE{{\mathscr{E}}}
\newcommand\sH{{\mathscr{H}}}
\newcommand\sM{{\mathscr{M}}}
\newcommand\sU{{\mathscr{U}}}
\newcommand\sX{{\mathscr{X}}}
\newcommand\eps{\varepsilon}
\newcommand\U{\operatorname{U}}
\newcommand\less{\setminus}
\newcommand\divg{\operatorname{div}}
\newcommand{\esssup}{\operatornamewithlimits{ess\ sup}}
\newcommand\grad{\operatorname{grad}}
\newcommand\tr{\operatorname{tr}}
\newcommand\vol{\operatorname{vol}}
\newcommand\apriori{{\emph{a priori }}}
\newcommand\Apriori{{\emph{A priori }}}
\newcommand\round{{\mathrm{round}}}
\numberwithin{equation}{section}
\numberwithin{figure}{section}
\begin{document}

\title[Relative energy gap for harmonic maps]{Relative energy gap for harmonic maps of Riemann surfaces into real analytic Riemannian manifolds}

\author[Paul M. N. Feehan]{Paul M. N. Feehan}
\address{Department of Mathematics, Rutgers, The State University of New Jersey, 110 Frelinghuysen Road, Piscataway, NJ 08854-8019, United States of America}
\email{feehan@math.rutgers.edu}

\date{This version: February 1, 2018, incorporating final galley proof corrections. To appear in \emph{Proceedings of the American Mathematical Society}.}

\begin{abstract}
We extend the well-known Sacks--Uhlenbeck energy gap result \cite[Theorem 3.3]{Sacks_Uhlenbeck_1981} for harmonic maps from closed Riemann surfaces into closed Riemannian manifolds from the case of maps with small energy (thus near a constant map), to the case of harmonic maps with high absolute energy but small energy relative to a reference harmonic map.
\end{abstract}


\subjclass[2010]{Primary 58E20; secondary 37D15}

\keywords{Harmonic maps, {\L}ojasiewicz--Simon gradient inequality}

\thanks{Paul Feehan was partially supported by National Science Foundation grant DMS-1510064 and the Oswald Veblen Fund and Fund for Mathematics (Institute for Advanced Study, Princeton) during the preparation of this article.}

\maketitle
\tableofcontents

\section{Introduction}
Let $(M,g)$ and $(N,h)$ be a pair of smooth Riemannian manifolds, with $M$ orientable. One defines the \emph{harmonic map energy function} by
\begin{equation}
\label{eq:Harmonic_map_energy_functional}
\sE_{g,h}(f)
:=
\frac{1}{2} \int_M |df|_{g,h}^2 \,d\vol_g,
\end{equation}
for smooth maps, $f:M\to N$, where $df:TM \to TN$ is the differential map. A map $f \in C^\infty(M;N)$ is \emph{(weakly) harmonic} if it is a \emph{critical point} of $\sE_{g,h}$, so $\sE_{g,h}'(f)(u) = 0$ for all $u \in C^\infty(M;f^*TN)$, where
\[
\sE_{g,h}'(f)(u) = \int_M \langle df, u\rangle_{g,h} \,d\vol_g.
\]
The purpose of this article is to prove

\begin{mainthm}[Relative energy gap for harmonic maps of Riemann surfaces into real analytic Riemannian manifolds]
\label{mainthm:Relative_energy_gap_harmonic_maps}
Let $(M,g)$ be a closed Riemann surface and $(N,h)$ a closed, real analytic Riemannian manifold equipped with a real analytic isometric embedding into a Euclidean space, $\RR^n$. If $f_\infty \in C^\infty(M; N)$ is a harmonic map, then there is a constant $\eps = \eps(f_\infty,g,h) \in (0,1]$ with the following significance. If $f \in C^\infty(M; N)$ is a harmonic map obeying
\begin{equation}
\label{eq:Relative_harmonic_map_energy_bound}
\|d(f - f_\infty)\|_{L^2(M;\RR^n)} + \|f - f_\infty\|_{L^2(M;\RR^n)}  < \eps,
\end{equation}
then $\sE_{g,h}(f) = \sE_{g,h}(f_\infty)$.
\end{mainthm}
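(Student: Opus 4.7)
The plan is to derive the theorem from a {\L}ojasiewicz--Simon gradient inequality for the harmonic map energy functional $\sE_{g,h}$ at the critical point $f_\infty$, of the form
\[
|\sE_{g,h}(f) - \sE_{g,h}(f_\infty)|^{2-\theta}
\leq
C\,\|\sE_{g,h}'(f)\|_{X^*}^2
\]
for $f$ in a neighborhood of $f_\infty$ in a suitable Banach space $X$ of maps $M \to \RR^n$, with exponent $\theta \in [1,2)$ and constant $C < \infty$ depending on $f_\infty$. Once this is established, the harmonicity of $f$ gives $\sE_{g,h}'(f) = 0$, and the inequality immediately forces $\sE_{g,h}(f) = \sE_{g,h}(f_\infty)$, which is the conclusion.

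I would proceed in three steps. First, I would regard $\sE_{g,h}$ as an extrinsic functional on $\RR^n$-valued Sobolev maps whose image lies in $N$, using the given real analytic isometric embedding $N \embed \RR^n$; because this embedding and the metric $g$ on $M$ are real analytic, the resulting functional is real analytic on an appropriate Sobolev neighborhood of $f_\infty$. Second, I would verify the abstract hypotheses of the {\L}ojasiewicz--Simon gradient inequality at $f_\infty$: the Hessian $\sE_{g,h}''(f_\infty)$ is, up to the second fundamental form of $N \embed \RR^n$, the Jacobi operator along $f_\infty$ acting on sections of $f_\infty^*TN$, a self-adjoint elliptic operator of Fredholm index zero on the relevant Sobolev spaces. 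An abstract {\L}ojasiewicz--Simon theorem then yields the displayed gradient inequality, provided $f$ is close enough to $f_\infty$ in the strong norm of $X$.

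Third, I would bridge the gap between the weak $W^{1,2}$-type hypothesis \eqref{eq:Relative_harmonic_map_energy_bound} and the stronger norm in which the {\L}ojasiewicz--Simon inequality naturally applies. The key input is an $\eps$-regularity theorem for harmonic maps from surfaces: since $f_\infty$ is smooth, for any sufficiently small geodesic ball $B_r(x) \subset M$ the quantity $\|df_\infty\|_{L^2(B_r(x))}$ is below the $\eps$-regularity threshold, whence
\[
\|df\|_{L^2(B_r(x))}
\leq
\|df_\infty\|_{L^2(B_r(x))} + \|d(f-f_\infty)\|_{L^2(M;\RR^n)}
\]
is also below the threshold once $\eps$ in \eqref{eq:Relative_harmonic_map_energy_bound} is small and $r$ is shrunk. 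Covering $M$ by finitely many such balls, $\eps$-regularity upgrades $f$ to uniformly bounded $C^{2,\alpha}$, and a standard Schauder bootstrap applied to the harmonic map equation for $f - f_\infty$ yields the desired closeness in $C^{2,\alpha}(M;\RR^n)$ (or whichever strong norm $X$ requires).

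The principal obstacle is precisely this matching of norms, because in dimension two the harmonic map equation is critical for $W^{1,2}$ scaling, so one cannot bootstrap purely within $W^{1,2}$. The two-step combination of Sacks--Uhlenbeck $\eps$-regularity with Schauder theory handles it, but one must choose the neighborhoods and constants in the abstract {\L}ojasiewicz--Simon theorem compatibly with the $\eps$-regularity threshold so that a single $\eps = \eps(f_\infty,g,h)$ governs the hypothesis in \eqref{eq:Relative_harmonic_map_energy_bound}. A secondary technical point is checking that the dual norm $\|\sE_{g,h}'(f)\|_{X^*}$ on the gradient is well-defined and genuinely vanishes at a $C^\infty$ harmonic map $f$, which follows once the bootstrap is in place.
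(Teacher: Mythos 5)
Your proposal is sound and reaches the theorem by the same overall skeleton as the paper (a {\L}ojasiewicz--Simon gradient inequality at $f_\infty$, applied after upgrading the weak hypothesis \eqref{eq:Relative_harmonic_map_energy_bound} to closeness in a strong norm, then $\sM(f)=0$ forces $\sE(f)=\sE(f_\infty)$), but the bridging step is genuinely different. You invoke Sacks--Uhlenbeck $\eps$-regularity: since $f_\infty$ is smooth, its local energy on small balls is below the threshold, hence so is that of $f$, giving uniform $C^{2,\alpha}$ bounds on $f$, after which a linear Schauder/elliptic bootstrap on the difference equation converts $W^{1,2}$-smallness of $f-f_\infty$ into $C^{2,\alpha}$-smallness; this matches the paper's \emph{alternative} proof in spirit (which verifies the $C^{2,\lambda}$ hypothesis of the Simon-type inequality, Theorem \ref{thm:Lojasiewicz-Simon_gradient_inequality_energy_functional_Holder}). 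The paper's primary proof avoids $\eps$-regularity and any H\"older bootstrap altogether: from the difference equation \eqref{eq:Harmonic_map_difference_equation} it derives, via H\"older with $1/p=1/2+1/q$ and the Calder\'on--Zygmund $W^{2,p}$ estimate, the bound \eqref{eq:Apriori_W2p_estimate_difference_two_harmonic_maps}, and then an absorption (rearrangement) argument in Lemma \ref{lem:Apriori_W1q_estimate_harmonic_map} bounds $\|f\|_{W^{1,q}}$ by $\|f_\infty\|_{W^{1,q}}$ using only the smallness in \eqref{eq:Relative_harmonic_map_energy_bound}; choosing a subcritical pair $k=2$, $p\in(1,2)$ (so $kp>2$) in Theorem \ref{thm:Lojasiewicz-Simon_gradient_inequality_energy_functional_Sobolev} then suffices, with no need to reach $C^{2,\alpha}$. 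Your route buys robustness from standard small-energy machinery, at the cost of carrying the $\eps$-regularity threshold and a multi-step bootstrap; the paper's route is more elementary and quantitative, exploiting that the closeness is to a fixed smooth harmonic map so that the critical ($kp=d$) difficulty can be sidestepped by a single subcritical $W^{2,p}$ estimate. Two small points to tighten if you write yours up: the difference equation carries the extra term $\bigl(A_h(f)-A_h(f_\infty)\bigr)(df_\infty,df_\infty)$, controlled by $C\,|f-f_\infty|\,|df_\infty|^2$ and hence small in $L^2$ under \eqref{eq:Relative_harmonic_map_energy_bound}, and you should record that the uniform $C^{2,\alpha}$ bound on $f$ depends on $f_\infty$ only through the radius at which $\|df_\infty\|_{L^2(B_r)}$ falls below the threshold, so that a single $\eps=\eps(f_\infty,g,h)$ works.
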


\begin{rmk}[Generalizations to the case of harmonic maps with potentials]
\label{rmk:Harmonic_maps_with_potentials}
In physics, harmonic maps arise in the context of non-linear sigma models and with such applications in mind, Theorem \ref{mainthm:Relative_energy_gap_harmonic_maps} should admit generalizations to allow, for example, the addition to $\sE_{g,h}$ of a real analytic potential function, $V:C^\infty(M;N) \to \RR$, in the definition \eqref{eq:Harmonic_map_energy_functional} of the energy, as explored by Branding \cite{Branding_2016agag}.
\end{rmk}

Naturally, Theorem \ref{mainthm:Relative_energy_gap_harmonic_maps} continues to hold if the condition \eqref{eq:Relative_harmonic_map_energy_bound} is replaced by the stronger (and conformally invariant) hypothesis,
\[
\|d(f - f_\infty)\|_{L^2(M;\RR^n)} + \|f - f_\infty\|_{L^\infty(M;\RR^n)}  < \eps.
\]
Thus, if $\wp$ is any conformal diffeomorphism of $(M,g)$, then the preceding condition on $f, f_\infty$ holds if and only if the harmonic maps $f\circ\wp, f_\infty\circ\wp$ obey
\[
\|d(f\circ\wp - f_\infty\circ\wp)\|_{L^2(M;\RR^n)} + \|f\circ\wp - f_\infty\circ\wp\|_{L^\infty(M;\RR^n)}  < \eps.
\]
Hence, the constants, $Z,\sigma,\theta$ in Theorem \ref{mainthm:Relative_energy_gap_harmonic_maps} are in this sense independent of the action of the conformal group of $(M,g)$ on harmonic maps from $M$ to $N$.

Theorem \ref{mainthm:Relative_energy_gap_harmonic_maps} may be viewed, in part, as a generalization of the following energy gap result due to Sacks and Uhlenbeck and who do not require that the target manifold be real analytic.

\begin{thm}[Energy gap near the constant map]
\label{thm:Energy_gap_near_constant_map}
\cite[Theorem 3.3]{Sacks_Uhlenbeck_1981}
Let $(M,g)$ be a closed Riemann surface and $(N,h)$ be a closed, smooth Riemannian manifold. Then there is a constant, $\eps > 0$, such that if $f\in C^\infty(M;N)$ is harmonic and $\sE_{g,h}(f) < \eps$, then $f$ is a constant map and $\sE_{g,h}(f) = 0$.
\end{thm}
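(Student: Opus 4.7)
The plan is to follow the original Sacks--Uhlenbeck strategy, combining a local $\varepsilon$-regularity estimate on conformal disks with a global absorption argument. By Nash's embedding theorem I may isometrically embed $(N,h)\hookrightarrow\RR^n$, so every harmonic map $f\colon M\to N$ satisfies, as an $\RR^n$-valued equation,
\[
\Delta_g f \;=\; -\,II_{f(x)}(df,df),
\]
where $II$ is the second fundamental form of $N$ in $\RR^n$; in particular $|\Delta_g f|_{\RR^n}\leq C_N|df|_g^2$ pointwise, with $C_N:=\|II\|_{L^\infty(N)}<\infty$ since $N$ is closed. Because $\dim_\RR M=2$ and $\sE_{g,h}$ is conformally invariant, I can reduce local regularity questions to the Euclidean unit disk $D_1\subset\RR^2$ via a finite atlas of conformal charts on $M$.

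The core ingredient is the following local claim: there exist $\varepsilon_0=\varepsilon_0(N)>0$ and $C_0=C_0(N)>0$ such that every smooth harmonic map $f\colon D_1\to N\subset\RR^n$ with $\int_{D_1}|df|^2\,dx<\varepsilon_0$ satisfies
\[
\|df\|_{L^\infty(D_{1/2})} \;\leq\; C_0\,\|df\|_{L^2(D_1)}.
\]
I would prove this by multiplying by a smooth cutoff $\eta\in C_c^\infty(D_1)$ with $\eta\equiv 1$ on $D_{3/4}$, applying Calder\'on--Zygmund $L^p$-estimates to $\Delta(\eta f)$, and using the Gagliardo--Nirenberg inequality
\[
\|df\|_{L^4(D_1)}^2 \;\leq\; C\,\|\nabla df\|_{L^2(D_1)}\,\|df\|_{L^2(D_1)}
\]
to control the quadratic nonlinearity $II_f(df,df)$. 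When $\|df\|_{L^2(D_1)}^2<\varepsilon_0$ is small enough, the quadratic term can be absorbed on the left, giving an $H^2$-estimate on $D_{3/4}$; a standard bootstrap on nested disks then upgrades this to the desired pointwise bound.

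Covering $M$ by finitely many conformal charts yields a constant $C_1=C_1(M,g,N)$ such that $\|df\|_{L^\infty(M)}\leq C_1\sqrt{\sE_{g,h}(f)}$ whenever $\sE_{g,h}(f)<\varepsilon_0$. To conclude that $f$ is constant, set $c:=\vol_g(M)^{-1}\int_M f\,d\vol_g\in\RR^n$ and pair the harmonic map equation with $f-c$; integration by parts yields
\[
\int_M |df|_g^2\,d\vol_g \;=\; \int_M \bigl\langle II_{f}(df,df),\,f-c\bigr\rangle_{\RR^n}\,d\vol_g
\;\leq\; C_N\,\|f-c\|_{L^\infty(M;\RR^n)}\int_M |df|_g^2\,d\vol_g.
\]
Since $|f(x)-c|_{\RR^n}\leq \diam_g(M)\,\|df\|_{L^\infty(M)}$ for every $x\in M$ (by integrating $df$ along minimizing geodesics in $M$), choosing $\varepsilon\in(0,\varepsilon_0]$ so small that $C_N\cdot\diam_g(M)\cdot C_1\sqrt{\varepsilon}<1/2$ forces $\int_M|df|^2=0$, so $df\equiv 0$ and $f$ is a constant map, whence $\sE_{g,h}(f)=0$.

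The principal obstacle is the local $\varepsilon$-regularity step: dimension two is critical for this nonlinearity, since $W^{1,2}\hookrightarrow L^\infty$ just fails and the quadratic term $II_f(df,df)$ sits exactly at the borderline, so the small-energy hypothesis must be used to perform the absorption rather than mere elliptic regularity alone. Once that bootstrap (or, alternatively, H\'elein's Hardy/BMO refinement of the right-hand side) is in hand, the global absorption argument above is routine.
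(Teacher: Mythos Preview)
The paper does not give its own proof of this statement: Theorem~\ref{thm:Energy_gap_near_constant_map} is quoted from Sacks--Uhlenbeck \cite[Theorem 3.3]{Sacks_Uhlenbeck_1981} as background and motivation, with no argument supplied. Your proposal is therefore not being compared against anything in the paper, but rather against the original Sacks--Uhlenbeck proof, and indeed your outline is essentially their argument: local $\varepsilon$-regularity on conformal disks (small energy $\Rightarrow$ $L^\infty$ gradient bound via absorption of the quadratic nonlinearity), then a global Poincar\'e-type absorption to force $df\equiv 0$. The sketch is sound; the only places requiring care are the ones you flag yourself, namely making the Gagliardo--Nirenberg/bootstrap step precise on the cut-off function $\eta f$ rather than on $f$ directly, and tracking the lower-order commutator terms $2\nabla\eta\cdot\nabla f + (\Delta\eta)f$.

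It is worth noting the contrast with the paper's own machinery. The paper proves its main result (Theorem~\ref{mainthm:Relative_energy_gap_harmonic_maps}) via the {\L}ojasiewicz--Simon gradient inequality, which requires $(N,h)$ to be real analytic. Specializing that argument to $f_\infty$ a constant map would recover Theorem~\ref{thm:Energy_gap_near_constant_map} only under that extra analyticity hypothesis. Your Sacks--Uhlenbeck approach, by contrast, needs only that $(N,h)$ be smooth --- exactly as stated --- because it exploits the explicit structure of the constant map (trivial linearization, quadratic nonlinearity with no zeroth-order term) rather than an abstract gradient inequality. This is why the paper emphasizes that Sacks--Uhlenbeck ``do not require that the target manifold be real analytic,'' and why Example~\ref{exmp:Li_Wang_section_4} shows the relative gap can fail for merely smooth targets away from the constant map.
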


The Sacks--Uhlenbeck Energy Gap Theorem \ref{thm:Energy_gap_near_constant_map} has been generalized by Branding \cite[Lemma 4.9]{Branding_2015} and by Jost and his collaborators \cite[Proposition 4.2]{Chen_Jost_Li_Wang_2006}, \cite[Proposition 5.2]{Jost_Liu_Zhu_2015preprint} to the case of Dirac-harmonic pairs. Theorem \ref{thm:Energy_gap_near_constant_map} ensures positivity of the constant $\hbar$ in the

\begin{defn}[Dirac--Planck constant]
\label{defn:Planck_constant}
Let $(N,h)$ be a closed, smooth Riemannian manifold. Then $\hbar$ denotes the \emph{least energy} of a non-constant $C^\infty$ map from $(S^2,g_\round)$ into $(N,h)$, where $g_\round$ is the standard round metric of radius one on $S^2$.
\end{defn}

The energy gap near the `ground state' characterized by the constant maps from $(S^2,g_\round)$ to $(N,h)$ appears to be unusual in the light of the following counter-example due to Li and Wang \cite{Li_Wang_2015} when $(N,h)$ is only $C^\infty$ rather than real analytic.

\begin{exmp}[Non-discreteness of the energy spectrum for harmonic maps from $S^2$ into a smooth Riemannian manifold with boundary]
\label{exmp:Li_Wang_section_4}
(See \cite[Section 4]{Li_Wang_2015}.)
There exists a smooth Riemannian metric $h$ on $N = S^2\times (-1,1)$ such that the energies of harmonic maps from $(S^2,g_\round)$ to $(N,h)$ have an accumulation point at the energy level $4\pi$, where, $g_\round$ denotes the standard round metric of radius one.
\end{exmp}

Thus we would \emph{not} expect Theorem \ref{mainthm:Relative_energy_gap_harmonic_maps} to hold when the hypothesis that $(N,h)$ is real analytic is omitted, except for the case where $f_\infty$ is a constant map. On the other hand, when $(N,h)$ \emph{is} real analytic, one has the following conjecture due to Lin \cite{Lin_1999}.

\begin{conj}[Discreteness for energies of harmonic maps from closed Riemann surfaces into analytic closed Riemannian manifolds]
\label{conj:Discreteness_harmonic_map_energies}
(Lin \cite[Conjecture 5.7]{Lin_1999}.)
Assume the hypotheses of Theorem \ref{mainthm:Relative_energy_gap_harmonic_maps} and that $(M,g)$ is the two-sphere, $S^2$, with its standard, round metric. Then the subset of critical values of the energy function, $\sE_{g,h}:C^\infty(S^2;N) \to [0,\infty)$, is closed and discrete.
\end{conj}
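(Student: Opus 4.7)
The plan is to combine the relative energy gap result, Theorem \ref{mainthm:Relative_energy_gap_harmonic_maps}, with the bubble-tree compactness theory of Sacks--Uhlenbeck and Parker, and to reduce Lin's conjecture to a uniform, \emph{parametric} version of the {\L}ojasiewicz--Simon gradient inequality valid along a family of approximately harmonic reference maps.

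Argue by contradiction: if the set of critical values of $\sE_{g,h}$ is not closed and discrete, there exist a sequence of harmonic maps $f_k \in C^\infty(S^2;N)$ and a real number $E \in [0,\infty)$ with $\sE_{g,h}(f_k) \to E$ and $\sE_{g,h}(f_k) \neq E$ for all $k$. Since the energies are bounded, Sacks--Uhlenbeck bubbling together with Parker's refinement yields, along a subsequence and after composition with conformal diffeomorphisms $\psi_k$ of $(S^2,g_\round)$, weak $W^{1,2}$-convergence $f_k\circ\psi_k \rightharpoonup f_\infty$ to a smooth harmonic limit, finitely many concentration points at which smooth harmonic bubbles $\phi_1,\dots,\phi_m: S^2 \to N$ develop, and the energy identity
\[
\sE_{g,h}(f_k) \;\longrightarrow\; \sE_{g,h}(f_\infty) + \sum_{j=1}^m \sE_{g,h}(\phi_j) \;=\; E.
\]

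For each large $k$, construct a pre-glued smooth map $F_k: S^2 \to N$ by cutting $f_\infty$ off on small disks around the concentration points, inserting rescaled copies of $\phi_j$ at matched scales $\lambda_{k,j} \to 0$, and projecting back to $N$ using the real-analytic nearest-point retraction from a tubular neighborhood in $\RR^n$. Standard pre-gluing estimates then deliver $\sE_{g,h}(F_k) \to E$, tension-field residuals $\tau(F_k)$ with $\|\tau(F_k)\|_{L^2} \to 0$, and proximity of $f_k\circ\psi_k$ to $F_k$ in the norm of \eqref{eq:Relative_harmonic_map_energy_bound}. The core analytic input required is a {\L}ojasiewicz--Simon inequality based at the \emph{approximately} harmonic reference $F_k$, with constants uniform in $k$: for any $f \in C^\infty(S^2;N)$ with $\|d(f - F_k)\|_{L^2} + \|f - F_k\|_{L^2} < \eps$,
\[
|\sE_{g,h}(f) - \sE_{g,h}(F_k)|^{\theta} \leq C\bigl(\|\sE_{g,h}'(f)\|_{*} + \|\tau(F_k)\|_{L^2}\bigr),
\]
with $C$ and $\theta \in (0,1/2]$ independent of $k$. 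Applied to $f = f_k\circ\psi_k$, for which $\sE_{g,h}'(f) = 0$ by harmonicity and two-dimensional conformal invariance of the energy, this forces $\sE_{g,h}(f_k) = \sE_{g,h}(F_k) \to E$ and, since the left-hand side is already constant in $k$ along appropriate subsequences, pins $\sE_{g,h}(f_k)$ to finitely many values accumulating only at $E$ itself, contradicting the choice of the sequence.

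The main obstacle is the parametric {\L}ojasiewicz--Simon inequality displayed above. The version underlying Theorem \ref{mainthm:Relative_energy_gap_harmonic_maps} is established at a single bona fide critical point $f_\infty$, using the real-analytic structure of $\sE_{g,h}$ in a Banach-space neighborhood of $f_\infty$. Extending it uniformly to a family $\{F_k\}$ with bubble scales $\lambda_{k,j} \to 0$ requires controlling the degeneration of the Hessian $\sE_{g,h}''(F_k)$ along the family. The anticipated degeneration occurs along the conformal-group orbits acting on each bubble---directions along which $\sE_{g,h}$ is already invariant and which account for the noncompactness of the space of harmonic $2$-spheres. The analytical heart of the proposed proof is therefore a slice construction transverse to these conformal orbits, together with a uniform {\L}ojasiewicz--Simon inequality in the normal directions and an error term that absorbs the tension of $F_k$. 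Such a parametric {\L}ojasiewicz--Simon inequality along a degenerating family of quasi-critical points is not presently available in the literature, and supplying it is the principal difficulty Lin's conjecture poses.
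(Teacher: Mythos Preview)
The paper does not prove this statement; it is stated as a \emph{conjecture} (attributed to Lin) and left open. Theorem~\ref{mainthm:Relative_energy_gap_harmonic_maps} is offered only as supporting evidence for it. There is therefore no proof in the paper to compare your proposal against.

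As for the proposal itself, you correctly diagnose that the obstruction is a parametric {\L}ojasiewicz--Simon inequality uniform along the degenerating family $\{F_k\}$, and you explicitly state that such an inequality is not presently available. That is the honest situation, and it means what you have written is a strategy outline, not a proof. Two further comments on the outline. First, even granting your displayed parametric inequality, the concluding step is unclear: applying it to $f=f_k\circ\psi_k$ yields only $|\sE(f_k)-\sE(F_k)|^\theta \leq C\|\tau(F_k)\|_{L^2}\to 0$, which is consistent with $\sE(f_k)\to E$ and $\sE(F_k)\to E$ and does not by itself force $\sE(f_k)=E$ for large $k$ or pin the values to a finite set; the sentence ``the left-hand side is already constant in $k$ along appropriate subsequences'' needs a genuine argument. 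Second, the bubble tree in general has several levels, so the pre-glued map $F_k$ must encode an iterated tree of bubbles on bubbles, and the slice/Hessian analysis must be done at each node; this is where the uniform control of the {\L}ojasiewicz--Simon constants becomes delicate, since the natural neighborhoods shrink with the gluing scales.
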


One may therefore view Theorem \ref{mainthm:Relative_energy_gap_harmonic_maps} as supporting evidence of the validity of Conjecture \ref{conj:Discreteness_harmonic_map_energies}. In the special case that $N$ is the Lie group $\U(n)$ with $n \geq 2$ and its standard Riemannian metric, Valli \cite[Corollary 8]{Valli_1988} has shown (using ideas of Uhlenbeck \cite{Uhlenbeck_1989}) that the energies of harmonic maps from $(S^2,g_\round)$ into $\U(n)$ are integral multiples of $8\pi$. If $(N,h)$ has non-positive curvature sectional curvature, then Adachi and Sunada \cite[Theorem 1]{Adachi_Sunada_1985} have shown that Conjecture \ref{conj:Discreteness_harmonic_map_energies} holds when $(M,g)$ is any closed Riemann surface.

\subsection{Outline of the article}
\label{subsec:Outline}
In Section \ref{sec:Lojasiewicz-Simon_gradient_inequality_harmonic_map_function}, we review the {\L}ojasiewicz--Simon gradient inequality for the harmonic map energy function based on results of the author and Maridakis \cite{Feehan_Maridakis_Lojasiewicz-Simon_harmonic_maps_v6} and Simon \cite{Simon_1983, Simon_1985}. In Section \ref{sec:Apriori_estimate_difference_two_harmonic_maps}, we prove certain \apriori estimates for the difference of two harmonic maps and, with the aid of the {\L}ojasiewicz--Simon gradient inequality, complete the proof of Theorem \ref{mainthm:Relative_energy_gap_harmonic_maps}.

\subsection{Acknowledgments}
\label{subsec:Acknowledgments}
I am very grateful to the Institute for Advanced Study, Princeton, for their support during the preparation of this article. I thank Sagun Chanillo, Fernando Cod{\'a} Marques, Joel Hass, Tobias Lamm, Paul Larain, Fang-Hua Lin, Thomas Parker, Tristan Rivi{\'e}re, Michael Taylor, Peter Topping, Karen Uhlenbeck, and especially Manousos Maridakis for helpful questions and comments that influenced the development of this article.

\section{{\L}ojasiewicz--Simon gradient inequality for the harmonic map energy function}
\label{sec:Lojasiewicz-Simon_gradient_inequality_harmonic_map_function}
In this section, we closely follow our treatment of the {\L}ojasiewicz--Simon gradient inequality for abstract and harmonic map energy functions provided by the author and Maridakis in \cite[Sections 1.1, 1.2, and 1.4]{Feehan_Maridakis_Lojasiewicz-Simon_harmonic_maps_v6}. Useful references for harmonic maps include Eells and Lemaire \cite{Eells_Lemaire_1978, Eells_Lemaire_1988}, Hamilton \cite{Hamilton_1975}, H{\'e}lein \cite{Helein_harmonic_maps}, H{\'e}lein and Wood \cite{Helein_Wood_2008}, Jost
\cite{Jost_harmonic_maps_surfaces, Jost_harmonic_maps_riemannian_manifolds, Jost_two_dim_geom_var_probs, Jost_1993pspm, Jost_riemannian_geometry_geometric_analysis}, Moser \cite{Moser_2005}, Parker \cite{ParkerHarmonic}, Sacks and Uhlenbeck \cite{Sacks_Uhlenbeck_1981, Sacks_Uhlenbeck_1982}, Schoen \cite{Schoen_1984msri}, Simon \cite{Simon_1996}, Struwe \cite{Struwe_variational_methods}, Urakawa \cite{Urakawa_1993}, Xin \cite{Xin_Geometry_harmonic_maps}, and citations contained therein.

When clear from the context, we omit explicit mention of the Riemannian metrics $g$ on $M$ and $h$ on $N$ and write $\sE = \sE_{g,h}$. Although initially defined for smooth maps, the energy function $\sE$ in \eqref{eq:Harmonic_map_energy_functional}, extends to the case of Sobolev maps of class $W^{1,2}$. To define the gradient, $\sM = \sM_{g,h}$, of the energy function $\sE$ in \eqref{eq:Harmonic_map_energy_functional} with respect to the $L^2$ metric on $C^\infty(M;N)$, we first choose an isometric embedding, $(N,h) \subset \RR^n$ for a sufficiently large $n$ (courtesy of the isometric embedding theorem due to Nash \cite{Nash_1956}), and recall that\footnote{Compare \cite[Equations (8.1.10) and (8.1.13)]{Jost_riemannian_geometry_geometric_analysis}, where Jost uses variations of $f$ of the form $\exp_{f}(tu)$.} by \cite[Equations (2.2)(i) and (ii)]{Simon_1996}
\begin{align*}
\left(u, \sM(f)\right)_{L^2(M,g)}
&:= \sE'(f)(u) = \left.\frac{d}{dt}\sE(\pi(f + tu))\right|_{t=0}
\\
&\,= \left(u, \Delta_g f\right)_{L^2(M,g)}
\\
&\,= \left(u, d\pi_h(f)\Delta_g f\right)_{L^2(M,g)},
\end{align*}
for all $u \in C^\infty(M;f^*TN)$, where $\pi_h$ is the nearest point projection onto $N$ from a normal tubular neighborhood and $d\pi_h(y):\RR^n \to T_yN$ is orthogonal projection, for all $y \in N$.  By \cite[Lemma 1.2.4]{Helein_harmonic_maps}, we have
\begin{equation}
\label{eq:Gradient_harmonic_map_operator}
\sM(f) = d\pi_h(f)\Delta_g f = \Delta_g f - A_h(f)(df,df),
\end{equation}
as in \cite[Equations (2.2)(iii) and (iv)]{Simon_1996}. Here, $A_h$ denotes the second fundamental form of the isometric embedding, $(N,h) \subset \RR^n$, and
\begin{equation}
\label{eq:Laplace-Beltrami_operator}
\Delta_g
:=
-\divg_g \grad_g
=
d^{*,g}d
=
-\frac{1}{\sqrt{\det g}} \frac{\partial}{\partial x^\beta}
\left(\sqrt{\det g}\, \frac{\partial f}{\partial x^\alpha} \right)
\end{equation}
denotes the Laplace-Beltrami operator for $(M,g)$ (with the opposite sign convention to that of \cite[Equations (1.14) and (1.33)]{Chavel}) acting on the scalar components $f^i$ of $f = (f^1,\ldots,f^n)$ and the $\{x^\alpha\}$ denote local coordinates on $M$. As usual, the gradient vector field, $\grad_g f^i \in C^\infty(TM)$, is defined by $\langle\grad_g f^i,\xi\rangle_g := df^i(\xi)$ for all $\xi \in C^\infty(TM)$ and $1
\leq i \leq n$ and the divergence function, $\divg_g\xi \in C^\infty(M;\RR)$, by the pointwise trace,
$\divg_g\xi := \tr(\eta \mapsto \nabla_\xi^g\eta)$, for all $\eta \in C^\infty(TM)$.

Given a smooth map, $f:M\to N$, an isometric embedding, $(N,h) \subset \RR^n$, a non-negative integer $k$, and constant $p \in [1,\infty)$, we define the Sobolev norms,
\[
\|f\|_{W^{k,p}(M;\RR^n)} := \left(\sum_{i=1}^n \|f^i\|_{W^{k,p}(M;\RR^n)}^p\right)^{1/p},
\]
with
\[
\|f^i\|_{W^{k,p}(M;\RR^n)} := \left(\sum_{j=0}^k \int_M |(\nabla^g)^j f^i|^p \,d\vol_g\right)^{1/p},
\]
where $\nabla^g$ denotes the Levi-Civita connection on $TM$ and all associated bundles (that is, $T^*M$ and their tensor products), and if $p = \infty$, we define
\[
\|f\|_{W^{k,\infty}(M;\RR^n)}
:=
\sum_{i=1}^{n}\sum_{j=0}^k \esssup_M |(\nabla^g)^j f^i|.
\]
If $k=0$, then we denote $\|f\|_{W^{0,p}(M;\RR^n)} = \|f\|_{L^p(M;\RR^n)}$. For $p \in [1,\infty)$ and nonnegative integers $k$, we use \cite[Theorem 3.12]{AdamsFournier} (applied to $W^{k,p}(M;\RR^n)$ and noting that $M$ is a closed manifold) and Banach space duality to define
\[
W^{-k,p'}(M;\RR^n) := \left(W^{k,p}(M;\RR^n)\right)^*,
\]
where $p'\in (1,\infty]$ is the dual exponent defined by $1/p+1/p'=1$. Elements of the continuous Banach dual space, $(W^{k,p}(M;\RR^n))^*$, may be characterized via \cite[Section 3.10]{AdamsFournier} as distributions in the Schwartz space, $\sD'(M;\RR^n)$ \cite[Section 1.57]{AdamsFournier}.

Spaces of H\"older continuous maps, $C^{k,\lambda}(M;N)$ for $\lambda \in (0,1)$ and integers $k \geq 0$, and norms,
\[
\|f\|_{C^{k,\lambda}(M;\RR^n)},
\]
may be defined as in \cite[Section 1.29]{AdamsFournier}.

We note that if $(N,h)$ is real analytic, then the isometric embedding, $(N,h) \subset \RR^n$, may also be chosen to be analytic by the analytic isometric embedding theorem due to Nash \cite{Nash_1966}, with a simplified proof due to Greene and Jacobowitz \cite{Greene_Jacobowitz_1971}).

\begin{defn}[Harmonic map]
\label{defn:Harmonic_map}
(See \cite[Definition 1.4.9]{Helein_harmonic_maps}.)
A map $f \in W^{1,2}(M;N)$ is called \emph{weakly harmonic} if it is a critical point of the $L^2$-energy functional \eqref{eq:Harmonic_map_energy_functional}, that is
\[
\sE'(f)(u) = 0, \quad \forall\, u\in C^\infty(M; f^*TN),
\]
and a map $f \in W^{2,p}(M;N)$, for $p \in [1,\infty]$, is called \emph{harmonic} if
\begin{equation}
\label{eq:Harmonic_map_equation}
\Delta_g f - A_h(df,df) = 0 \quad\text{a.e. on } M.
\end{equation}
\end{defn}

A well-known result due to H\'elein \cite[Theorem 4.1.1]{Helein_harmonic_maps} tells us that if $M$ has dimension $d=2$, then $f \in C^\infty(M;N)$; for $d \geq 3$, regularity results are far more limited --- see, for example,
\cite[Theorem 4.3.1]{Helein_harmonic_maps} due to Bethuel. From \cite{Feehan_Maridakis_Lojasiewicz-Simon_harmonic_maps_v6}, we recall the

\begin{thm}[{\L}ojasiewicz--Simon gradient inequality for the energy function for maps between pairs of Riemannian manifolds]
\label{thm:Lojasiewicz-Simon_gradient_inequality_energy_functional_Sobolev}
(See \cite[Theorem 5]{Feehan_Maridakis_Lojasiewicz-Simon_harmonic_maps_v6}.)
Let $d\geq 2$ and $k \geq 1$ be integers and $p\in (1,\infty)$ be such that $kp > d$. Let $(M,g)$ and $(N,h)$ be closed, smooth Riemannian manifolds, with $M$ of dimension $d$. If $(N,h)$ is real analytic (respectively, $C^\infty$) and $f\in W^{k,p}(M;N)$, then the gradient map $\sM$ in \eqref{eq:Gradient_harmonic_map_operator} for the energy function, $\sE:W^{k,p}(M; N)\to\RR$, in \eqref{eq:Harmonic_map_energy_functional},
\[
W^{k,p}(M; N) \ni f \mapsto \sM(f) \in W^{k-2,p}(M; f^*TN) \subset W^{k-2,p}(M; \RR^n),
\]
is a real analytic (respectively, $C^\infty$) map of Banach spaces. If $(N,h)$ is real analytic and $f_\infty \in W^{k,p}(M; N)$ is a weakly harmonic map, then there are positive constants $Z \in (0, \infty)$, and $\sigma \in (0,1]$, and $\theta \in [1/2,1)$, depending on $f_\infty$, $g$, $h$, $k$, $p$, with the following significance. If $f\in W^{k,p}(M;N)$ obeys
\begin{equation}
\label{eq:Lojasiewicz-Simon_gradient_inequality_harmonic_map_neighborhood_Sobolev}
\|f - f_\infty\|_{W^{k,p}(M;\RR^n)} < \sigma,
\end{equation}
then the gradient $\sM$ in \eqref{eq:Gradient_harmonic_map_operator} of the harmonic map energy function $\sE$ in
\eqref{eq:Harmonic_map_energy_functional} obeys
\begin{equation}
\label{eq:Lojasiewicz-Simon_gradient_inequality_harmonic_map_energy_functional_Sobolev}
\|\sM(f)\|_{W^{k-2,p}(M;f^*TN)}
\geq
Z|\sE(f) - \sE(f_\infty)|^\theta.
\end{equation}
\end{thm}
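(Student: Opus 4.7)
\medskip

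The plan is to reduce the theorem to an abstract Łojasiewicz--Simon inequality (of the type developed by Simon \cite{Simon_1983,Simon_1985}, and in more abstract Banach-space form by Chill, Haraux--Jendoubi, and the author--Maridakis in \cite{Feehan_Maridakis_Lojasiewicz-Simon_harmonic_maps_v6}), and then to verify the analytic/Fredholm hypotheses of that abstract framework for the pair $(\sE,\sM)$ at $f_\infty$.

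\emph{Step 1: Analyticity of $\sE$ and $\sM$.} Fix a real analytic isometric embedding $(N,h)\subset\RR^n$ (Nash--Greene--Jacobowitz). The operator $\Delta_g:W^{k,p}(M;\RR^n)\to W^{k-2,p}(M;\RR^n)$ is bounded and linear, hence entire analytic, so it suffices to treat the substitution-type term $f\mapsto A_h(f)(df,df)$. Because $kp>d$, the Sobolev embedding gives $W^{k,p}(M;\RR^n)\hookrightarrow C^0(M;\RR^n)$, and $W^{k,p}(M;\RR^n)$ is a Banach algebra under pointwise multiplication (together with the paired multiplication estimates $W^{k,p}\cdot W^{k-1,p}\hookrightarrow W^{k-1,p}$). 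A standard Nemytskii-operator argument, expanding $A_h$ in a convergent Taylor series along $f_\infty$ and estimating each monomial via the Banach-algebra inequality, shows that $f\mapsto A_h\circ f$ is real analytic $W^{k,p}(M;N)\to W^{k,p}(M;\mathrm{Hom}(\RR^n\otimes\RR^n,\RR^n))$. Combining with the continuous bilinear pairing $(B,du)\mapsto B(df,df)$ produces the required real analyticity of $\sM$ as a map into $W^{k-2,p}$; integrating yields analyticity of $\sE$. (The $C^\infty$ statement follows by the same argument when $A_h$ is merely smooth.)

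\emph{Step 2: Fredholm property of the Hessian.} The linearization $\sM'(f_\infty):W^{k,p}(M;f_\infty^*TN)\to W^{k-2,p}(M;f_\infty^*TN)$ is the Jacobi operator at $f_\infty$, which in coordinates takes the form $\Delta_g u + R(u)$ where $R$ is a zero-order real analytic bundle endomorphism built from $A_h$ and $df_\infty$. This is a formally self-adjoint, second-order linear elliptic operator on sections of a vector bundle over the closed manifold $M$; hence it is Fredholm with index zero between the corresponding Sobolev spaces. The kernel $K\subset W^{k,p}(M;f_\infty^*TN)$ is finite dimensional and consists of smooth Jacobi fields.

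\emph{Step 3: Apply the abstract Łojasiewicz--Simon inequality.} With Steps 1 and 2 in hand, the hypotheses of the abstract Banach-space Łojasiewicz--Simon gradient inequality are fulfilled for $\sE$ at the critical point $f_\infty$, with the Banach space $\mathscr{X}=W^{k,p}(M;f_\infty^*TN)$, dual-type target space $W^{k-2,p}(M;f_\infty^*TN)$, and gradient $\sM$; note the duality pairing between $W^{k,p}$ and $W^{k-2,p}$ provided by the $L^2$ inner product together with the elliptic regularity of the Hessian. The abstract result then yields constants $Z\in(0,\infty)$, $\sigma\in(0,1]$, and $\theta\in[1/2,1)$ so that
\[
\|\sM(f)\|_{W^{k-2,p}}\ \ge\ Z\,|\sE(f)-\sE(f_\infty)|^\theta
\]
whenever $\|f-f_\infty\|_{W^{k,p}}<\sigma$, which is the desired inequality.

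\emph{Main obstacle.} The heart of the argument is the analyticity verification in Step 1: one must carefully justify that composition with the real analytic second fundamental form $A_h$, together with the quadratic contraction against $df\otimes df$, defines a real analytic map into the negative-regularity space $W^{k-2,p}$ (not merely into $L^2$ or $C^0$), and uniformly on $W^{k,p}$-neighborhoods of $f_\infty$. This is where the hypothesis $kp>d$ is essential, since it supplies both the Banach-algebra structure needed to control the Taylor expansion term-by-term and the embedding $W^{k,p}\hookrightarrow C^0$ that keeps $f(M)$ inside the tubular neighborhood of $N$ on which $A_h$ and $\pi_h$ are analytic. Once this analyticity is established, the Fredholm and abstract-inequality steps are essentially formal.
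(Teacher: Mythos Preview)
Your proposal is correct and follows essentially the same approach as the paper: the paper does not give a self-contained proof of this theorem but explains that it follows from the abstract {\L}ojasiewicz--Simon gradient inequality (stated here as Theorem~\ref{thm:Lojasiewicz-Simon_gradient_inequality}) upon choosing $\sX = W^{k,p}(M;f_\infty^*TN)$ and $\tilde\sX = W^{k-2,p}(M;f_\infty^*TN)$, with the analyticity and Fredholm verifications carried out in the cited companion paper \cite{Feehan_Maridakis_Lojasiewicz-Simon_harmonic_maps_v6}. Your Steps~1--3 reproduce exactly this structure, and your identification of the $kp>d$ Banach-algebra/Nemytskii argument as the crux of the analyticity step matches the paper's emphasis.
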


\begin{rmk}[On the hypotheses of Theorem \ref{thm:Lojasiewicz-Simon_gradient_inequality_energy_functional_Sobolev}]
When $k=d$ and $p=1$, then $W^{d,1}(M;\RR) \subset C(M;\RR)$ is a continuous embedding by \cite[Theorem 4.12]{AdamsFournier} and $W^{d,1}(M;\RR)$ is a Banach algebra by \cite[Theorem 4.39]{AdamsFournier}. In particular, $W^{d,1}(M; N)$ is a real analytic Banach manifold by \cite[Proposition 3.2]{Feehan_Maridakis_Lojasiewicz-Simon_harmonic_maps_v6} and the harmonic map energy function, $\sE: W^{d,1}(M; N) \to \RR$, is real analytic by
\cite[Proposition 3.5]{Feehan_Maridakis_Lojasiewicz-Simon_harmonic_maps_v6}. However, the operator $\sM'(f_\infty):W^{d,1}(M; f_\infty^*TN) \to W^{d-2,1}(M; f_\infty^*TN)$ may not be Fredholm.
\end{rmk}

Theorem \ref{thm:Lojasiewicz-Simon_gradient_inequality_energy_functional_Sobolev} extends a version of the {\L}ojasiewicz--Simon gradient inequality that is stated by Simon as \cite[Equation (4.27)]{Simon_1985} and can be derived from his more general \cite[Theorem 3]{Simon_1983}.

\begin{thm}[{\L}ojasiewicz--Simon gradient inequality for the energy function for maps between pairs of Riemannian manifolds]
\label{thm:Lojasiewicz-Simon_gradient_inequality_energy_functional_Holder}
(See \cite[Corollary 6]{Feehan_Maridakis_Lojasiewicz-Simon_harmonic_maps_v6}, \cite[Theorem 3]{Simon_1983}, \cite[Equation (4.27)]{Simon_1985}.)
Let $d\geq 2$ and $\lambda\in (0,1)$ be constants, $(M,g)$ a closed, smooth Riemannian manifold of dimension $d$ and $(N,h)$ is closed, real analytic Riemannian manifold. If $f_\infty \in C^{2,\lambda}(M; N)$ is a harmonic map, then there are positive constants $Z \in (0, \infty)$, and $\sigma \in (0,1]$, and $\theta \in [1/2,1)$, depending on $f_\infty$, $g$, $h$, $\lambda$, with the following significance. If $f\in C^{2,\lambda}(M;N)$ obeys
\begin{equation}
\label{eq:Lojasiewicz-Simon_gradient_inequality_harmonic_map_neighborhood_Holder}
\|f - f_\infty\|_{C^{2,\lambda}(M;\RR^n)} < \sigma,
\end{equation}
then the gradient $\sM$ in \eqref{eq:Gradient_harmonic_map_operator} of the harmonic map energy function $\sE$ in
\eqref{eq:Harmonic_map_energy_functional} obeys
\begin{equation}
\label{eq:Lojasiewicz-Simon_gradient_inequality_harmonic_map_energy_functional_Holder}
\|\sM(f)\|_{L^2(M;f^*TN)}
\geq
Z|\sE(f) - \sE(f_\infty)|^\theta.
\end{equation}
\end{thm}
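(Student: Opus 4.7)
The plan is to derive this H\"older--space version of the {\L}ojasiewicz--Simon gradient inequality from the Sobolev version stated immediately above, via continuous embeddings between $C^{2,\lambda}$ and Sobolev spaces of maps into $\RR^n$.

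First I would select $k = 2$ and some $p \in (1,\infty)$ satisfying $2p > d$, so that the hypothesis $kp > d$ of the Sobolev version holds; in the principal application to Riemann surfaces, where $d = 2$, one can simply take $p = 2$. Since $M$ is compact, the continuous embedding $C^{2,\lambda}(M;\RR^n) \hookrightarrow W^{2,p}(M;\RR^n)$ supplies a constant $C_{\mathrm{emb}}$ for which $\|f - f_\infty\|_{W^{2,p}} \leq C_{\mathrm{emb}}\|f - f_\infty\|_{C^{2,\lambda}}$. Choosing $\sigma$ no larger than $\sigma_0/C_{\mathrm{emb}}$, where $\sigma_0$ is the constant from the Sobolev version, converts the H\"older smallness condition on $f$ into the Sobolev smallness condition, so the Sobolev {\L}ojasiewicz--Simon inequality applies and produces
\[
\|\sM(f)\|_{L^p(M; f^*TN)} \geq Z_0 |\sE(f) - \sE(f_\infty)|^{\theta_0}
\]
for some $Z_0 > 0$ and $\theta_0 \in [1/2, 1)$ depending on $f_\infty, g, h, p$.

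Converting this $L^p$ lower bound into the desired $L^2$ lower bound splits into cases according to whether $p \leq 2$ or $p > 2$. When $p \leq 2$ (available for $d \leq 3$ via $p = 2$), H\"older's inequality on the compact manifold $M$ gives $\|u\|_{L^p} \leq \vol_g(M)^{1/p - 1/2}\|u\|_{L^2}$, yielding immediately $\|\sM(f)\|_{L^2} \geq Z|\sE(f) - \sE(f_\infty)|^{\theta_0}$ with $\theta = \theta_0$. Otherwise, for $p > 2$, I would use the interpolation inequality $\|u\|_{L^p} \leq \|u\|_{L^2}^{2/p}\|u\|_{L^\infty}^{1 - 2/p}$ together with a uniform $L^\infty$ bound on $\sM(f) = \Delta_g f - A_h(f)(df, df)$ that follows from $\|f\|_{C^{2,\lambda}} \leq \|f_\infty\|_{C^{2,\lambda}} + \sigma$ and the boundedness of the second fundamental form $A_h$ on the compact manifold $N$. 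This yields $\|\sM(f)\|_{L^2} \geq Z|\sE(f) - \sE(f_\infty)|^{\theta_0 p/2}$.

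The main obstacle lies in guaranteeing that the final {\L}ojasiewicz exponent remains within $[1/2, 1)$ for every dimension $d \geq 2$. This is automatic when $d \leq 3$ by the $p = 2$ route, but when $d \geq 4$ the interpolation produces the exponent $\theta_0 p/2$, which can exceed $1$ for $\theta_0$ close to $1$ and $p$ large. Circumventing this limitation requires either a more refined selection of Sobolev parameters combined with a sharper analysis of how $\theta_0$ depends on $p$ (exploiting that $\theta_0$ is genuinely intrinsic to the geometry of $\sE$ at $f_\infty$ and not an artifact of the chosen Sobolev pair), or a direct appeal to the abstract {\L}ojasiewicz--Simon framework of Feehan--Maridakis applied in the Banach space $X = C^{2,\lambda}(M; N)$ with gradient taking values in $L^2(M; f_\infty^*TN)$, using Schauder theory to establish the Fredholm property of the linearization $\sM'(f_\infty): C^{2,\lambda} \to C^{0,\lambda}$.
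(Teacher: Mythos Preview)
Your proposal is essentially correct and closely parallels the paper's own discussion, including the honest identification of the dimensional obstruction. Two minor differences are worth noting. First, for low dimensions the paper chooses the Sobolev pair so that there is a continuous embedding $L^2(M) \subset W^{k-2,p}(M)$ on the \emph{gradient} side (taking $k=p=2$ when $d=2$, and $k=1$ with $p$ slightly larger than $d$ when $d=3$, so that $L^2 \subset W^{-1,p}$); this converts the $W^{k-2,p}$ lower bound on $\sM(f)$ directly into an $L^2$ lower bound without touching the exponent $\theta$, and so avoids your interpolation step entirely. Your $k=2$, $p=2$ route achieves the same thing for $d\leq 3$ and is arguably cleaner. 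Second, for $d\geq 4$ the paper does exactly what you propose in your final sentence: it abandons the detour through the Sobolev version and applies an abstract {\L}ojasiewicz--Simon theorem (Huang's \cite[Theorem 2.4.2 (i)]{Huang_2006}, or equivalently the authors' \cite[Theorem 3]{Feehan_Maridakis_Lojasiewicz-Simon_harmonic_maps_v6}) directly with $\sX = C^{2,\lambda}(M;f_\infty^*TN)$, $\tilde\sX = C^{0,\lambda}(M;f_\infty^*TN)$, and Hilbert space $\sH = L^2(M;f_\infty^*TN)$, relying on Schauder theory for the Fredholm property of $\sM'(f_\infty)$. Your first suggested workaround (tracking the dependence of $\theta_0$ on $p$) is not pursued by the paper and does not obviously succeed.
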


\begin{rmk}[Other versions of the {\L}ojasiewicz--Simon gradient inequality for the harmonic map energy function]
Topping \cite[Lemma 1]{Topping_1997} proved a {\L}ojasiewicz-type gradient inequality for maps, $f:S^2 \to S^2$, with small energy, with the latter criterion replacing the usual small $C^{2,\lambda}(M;\RR^n)$ norm criterion of Simon for the difference between a map and a critical point. Topping's result is generalized by Liu and Yang in \cite[Lemma 3.3]{Liu_Yang_2010}. Kwon \cite[Theorem 4.2]{KwonThesis} obtains a {\L}ojasiewicz-type gradient inequality for maps, $f:S^2 \to N$, that are $W^{2,p}(S^2;\RR^n)$-close to a harmonic map, with $1 < p \leq 2$.
\end{rmk}

When $d=2$ in the hypotheses of Theorem \ref{thm:Lojasiewicz-Simon_gradient_inequality_energy_functional_Sobolev}, the reader will note that the two cases that are most directly applicable to a proof of Theorem \ref{mainthm:Relative_energy_gap_harmonic_maps} are omitted, namely the cases $k=2$ and $p=1$ or $k=1$ and $p=2$, which are both critical since $kp=d$. We shall briefly comment on each of these two cases.

When $d=2$, $k=1$, and $p=2$, it appears very difficult to verify the hypotheses of
\cite[Theorem 2]{Feehan_Maridakis_Lojasiewicz-Simon_harmonic_maps_v6}. The analytical difficulties are very much akin to those confronted by H\'elein \cite{Helein_harmonic_maps} in his celebrated proof of smoothness of weakly harmonic maps from Riemann surfaces. However, it is unclear that H\'elein's methods could be used to extend Theorem \ref{thm:Lojasiewicz-Simon_gradient_inequality_energy_functional_Sobolev} to the case $d=2$, $k=1$, and $p=2$.

Similarly, when $d=2$, $k=2$, and $p=1$, it is very difficult to verify the hypotheses of
\cite[Theorem 2]{Feehan_Maridakis_Lojasiewicz-Simon_harmonic_maps_v6}. One might speculate that a version of Theorem \ref{thm:Lojasiewicz-Simon_gradient_inequality_energy_functional_Sobolev} could hold if the role of the pair of Sobolev spaces, $W^{2,1}(M; f_\infty^*TN)$ and $L^1(M; f_\infty^*TN)$, were replaced by suitably defined \emph{local Hardy spaces}. We refer the reader to Semmes \cite{Semmes_1994}, Stein \cite{Stein_HarmonicAnalysis}, and Taylor \cite{Taylor_tools_for_pde} for introductions to Hardy spaces of functions on Euclidean space and to H\'elein \cite{Helein_harmonic_maps} for their application to the problem of regularity for weakly harmonic maps from Riemann surfaces. Auscher, McIntosh, Morris \cite{Auscher_McIntosh_Morris_2015}, Carbonaro, McIntosh, and Morris \cite{Carbonaro_McIntosh_Morris_2013} and Taylor \cite{Taylor_2009} provide definitions of local Hardy spaces on Riemannian manifolds. However, the analytical difficulties appear formidable in any such approach.

Fortunately, in our proof of Theorem \ref{mainthm:Relative_energy_gap_harmonic_maps}, we can apply Theorem \ref{thm:Lojasiewicz-Simon_gradient_inequality_energy_functional_Sobolev} with \emph{non-critical} Sobolev exponents, namely $d=2$, $k=2$, and $p\in(1,\infty)$ by exploiting certain \apriori estimates for harmonic maps similar to those used by Sacks and Uhlenbeck \cite{Sacks_Uhlenbeck_1981}.

Theorem \ref{thm:Lojasiewicz-Simon_gradient_inequality_energy_functional_Sobolev} is proved by the author and Maridakis in \cite{Feehan_Maridakis_Lojasiewicz-Simon_harmonic_maps_v6} as a consequence of a more general abstract {\L}ojasiewicz--Simon gradient inequality for an analytic function on a Banach space, namely
\cite[Theorem 2]{Feehan_Maridakis_Lojasiewicz-Simon_harmonic_maps_v6}, while Theorem \ref{thm:Lojasiewicz-Simon_gradient_inequality_energy_functional_Holder} may be deduced as a consequence of \cite[Theorem 2]{Feehan_Maridakis_Lojasiewicz-Simon_harmonic_maps_v6}.

To state the abstract \cite[Theorem 2]{Feehan_Maridakis_Lojasiewicz-Simon_harmonic_maps_v6}, we let $\sX$ be a Banach space and let $\sX^*$ denote its continuous dual space. We call a bilinear form, $b:\sX\times\sX \to \RR$, \emph{definite} if $b(x,x) \neq 0$ for all $x \in \sX\less\{0\}$. We say that a continuous \emph{embedding} of a Banach space into its continuous dual space, $\jmath:\sX\to\sX^*$, is \emph{definite} if the pullback of the canonical pairing, $\sX\times\sX \ni (x,y) \mapsto \langle x,\jmath(y)\rangle_{\sX\times\sX^*} \to \RR$, is a definite bilinear form. (This hypothesis on the continuous embedding, $\sX \subset \sX^*$, is easily achieved given a continuous embedding of $\sX$ into a Hilbert space $\sH$ but the increased generality is often convenient.)

\begin{defn}[Gradient map]
\label{defn:Huang_2-1-1}
(See \cite[Section 2.5]{Berger_1977}, \cite[Definition 2.1.1]{Huang_2006}.)
Let $\sU\subset \sX$ be an open subset of a Banach space, $\sX$, and let $\tilde\sX$ be a Banach space with continuous embedding, $\tilde\sX \subseteqq \sX^*$. A continuous map, $\sM:\sU\to \tilde\sX$, is called a \emph{gradient map} if there exists a $C^1$ function, $\sE:\sU\to\RR$, such that
\begin{equation}
\label{eq:Differential_and_gradient_maps}
\sE'(x)v = \langle v,\sM(x)\rangle_{\sX\times\sX^*}, \quad \forall\, x \in \sU, \quad v \in \sX,
\end{equation}
where $\langle \cdot , \cdot \rangle_{\sX\times\sX^*}$ is the canonical bilinear form on $\sX\times\sX^*$. The real-valued function, $\sE$, is called a \emph{potential} for the gradient map, $\sM$.
\end{defn}

\begin{thm}[{\L}ojasiewicz--Simon gradient inequality for analytic functions on Banach spaces]
\label{thm:Lojasiewicz-Simon_gradient_inequality}
(See \cite[Corollary 3]{Feehan_Maridakis_Lojasiewicz-Simon_harmonic_maps_v6}.)
Let $\sX$ and $\tilde\sX$ be Banach spaces with continuous embeddings, $\sX \subset \tilde\sX \subset \sX^*$, and such that the embedding, $\sX \subset \sX^*$, is definite. Let $\sU \subset \sX$ be an open subset, $\sE:\sU\to\RR$ a $C^2$ function with real analytic gradient map, $\sM:\sU\to\tilde\sX$, and $x_\infty\in\sU$ a critical point of $\sE$, that is, $\sM(x_\infty) = 0$. If $\sM'(x_\infty):\sX\to \tilde\sX$ is a Fredholm operator with index zero, then there are constants, $Z \in (0,\infty)$, and $\sigma \in (0,1]$, and $\theta \in [1/2, 1)$, with the following significance. If $x \in \sU$ obeys
\begin{equation}
\label{eq:Lojasiewicz-Simon_gradient_inequality_neighborhood_general}
\|x-x_\infty\|_\sX < \sigma,
\end{equation}
then
\begin{equation}
\label{eq:Lojasiewicz-Simon_gradient_inequality_analytic_functional_general}
\|\sM(x)\|_{\tilde\sX} \geq Z|\sE(x) - \sE(x_\infty)|^\theta.
\end{equation}
\end{thm}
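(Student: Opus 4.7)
My plan is the now-classical Lyapunov--Schmidt strategy of Simon \cite{Simon_1983}: use the Fredholm index-zero hypothesis to split off a finite-dimensional piece on which the classical (finite-dimensional) {\L}ojasiewicz inequality for real analytic functions applies, then lift the resulting estimate back to the Banach-space setting using real analyticity of $\sM$ and invertibility of $\sM'(x_\infty)$ transverse to $\ker\sM'(x_\infty)$.

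First, set $L := \sM'(x_\infty) : \sX \to \tilde\sX$ and, using the Fredholm property, choose continuous topological splittings $\sX = \ker L \oplus \sX_1$ and $\tilde\sX = \ran L \oplus \sY_1$ with $\dim\ker L = \dim \sY_1 < \infty$; let $P : \tilde\sX \to \ran L$ and $Q := I - P$ be the associated continuous projections. Writing $x = x_\infty + \xi + \eta$ with $\xi \in \ker L$ and $\eta \in \sX_1$, the equation $P\sM(x_\infty + \xi + \eta) = 0$ has, by the analytic implicit function theorem, a unique real analytic local solution $\eta = \eta(\xi)$ with $\eta(0) = 0$ and $\eta'(0) = 0$; the input is that $\partial_\eta [P\sM](0,0) = PL|_{\sX_1} : \sX_1 \to \ran L$ is a Banach isomorphism.

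Next, define the reduced energy $f(\xi) := \sE(x_\infty + \xi + \eta(\xi))$ on a neighborhood of $0 \in \ker L$. Since $\sM$ is real analytic and $\sE$ is a $C^2$ potential for it, $\sE$ is itself real analytic, and hence so is $f$ on a finite-dimensional domain. The classical {\L}ojasiewicz inequality then supplies constants $c > 0$ and $\theta \in [1/2, 1)$ with $|\nabla f(\xi)| \geq c|f(\xi) - f(0)|^\theta$ for $\xi$ near $0$. Differentiating $f$ through the chain rule and using $P\sM(x_\infty + \xi + \eta(\xi)) \equiv 0$ to kill the $\eta'(\xi)$-contribution, one finds that $\nabla f(\xi)$ pairs $\xi$-variations only against the $\sY_1$-component of $\sM(x_\infty + \xi + \eta(\xi))$; since $\sY_1$ is finite dimensional, all norms on it are comparable, giving $|\nabla f(\xi)| \leq C_0\, \|\sM(x_\infty + \xi + \eta(\xi))\|_{\tilde\sX}$.

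Finally I would pass from the reduced point $x_\infty + \xi + \eta(\xi)$ back to the general $x = x_\infty + \xi + \eta_0$. Setting $w := \eta_0 - \eta(\xi) \in \sX_1$, the fundamental theorem of calculus and continuity of $\sM'$ near $x_\infty$ give $\|\sM(x) - \sM(x_\infty + \xi + \eta(\xi))\|_{\tilde\sX} \leq C_1 \|w\|_\sX$ and $|\sE(x) - f(\xi)| \leq C_2 \|w\|_\sX \sup_{s \in [0,1]} \|\sM(x_\infty + \xi + \eta(\xi) + sw)\|_{\tilde\sX}$, while the invertibility of $PL|_{\sX_1}$ yields $\|w\|_\sX \leq C_3 \|P\sM(x)\|_{\tilde\sX} \leq C_3 \|\sM(x)\|_{\tilde\sX}$. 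Combining these with the finite-dimensional {\L}ojasiewicz inequality for $f$, and shrinking $\sigma$ if needed so that the $\|w\|_\sX$-error terms are absorbed into the gradient-dominant term (this is where $\theta \geq 1/2$ is essential), yields \eqref{eq:Lojasiewicz-Simon_gradient_inequality_analytic_functional_general}. The main obstacle is precisely this absorption step: one must track how the two norms $\|\cdot\|_\sX$ and $\|\cdot\|_{\tilde\sX}$ interact through the chain $\sX \subset \tilde\sX \subset \sX^*$, and it is the hypothesis that the embedding $\sX \subset \sX^*$ be definite that makes the pairing used in Definition \ref{defn:Huang_2-1-1} behave as a genuine duality in these quantitative estimates.
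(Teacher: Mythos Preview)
The paper does not supply its own proof of this statement: Theorem~\ref{thm:Lojasiewicz-Simon_gradient_inequality} is quoted from \cite[Corollary~3]{Feehan_Maridakis_Lojasiewicz-Simon_harmonic_maps_v6} and used only as a black-box input for deriving Theorems~\ref{thm:Lojasiewicz-Simon_gradient_inequality_energy_functional_Sobolev} and~\ref{thm:Lojasiewicz-Simon_gradient_inequality_energy_functional_Holder}. There is therefore nothing in the present paper to compare your argument against.

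That said, your Lyapunov--Schmidt sketch is precisely the route taken in Simon~\cite{Simon_1983} and in the cited Feehan--Maridakis reference, and the overall architecture is correct. One point in your outline deserves more care: the assertion that $P\sM\equiv 0$ ``kills the $\eta'(\xi)$-contribution'' to $\nabla f(\xi)$ is not automatic for \emph{arbitrary} topological complements $\sX_1$ and $\sY_1$. It holds if one makes the specific compatible choices $\sY_1 := \jmath(\Ker L)$, where $\jmath:\sX\hookrightarrow\tilde\sX$ is the given embedding, and $\sX_1 := \{x\in\sX:\langle x,\jmath(k)\rangle_{\sX\times\sX^*}=0\text{ for all }k\in\Ker L\}$. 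That these really are complements to $\Ker L$ and $\Ran L$ respectively is exactly where the symmetry $\langle v,Lw\rangle=\langle w,Lv\rangle$ (inherited from symmetry of the Hessian $\sE''(x_\infty)$) and the definiteness of $\jmath$ are used --- not, as you suggest at the end, primarily in the absorption step. With those choices, $\sM$ at the reduced point lies in $\jmath(\Ker L)$ and hence pairs to zero against $\eta'(\xi)v\in\sX_1$, and your computation of $\nabla f(\xi)$ goes through.
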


Theorem \ref{thm:Lojasiewicz-Simon_gradient_inequality_energy_functional_Sobolev} follows from Theorem \ref{thm:Lojasiewicz-Simon_gradient_inequality} by choosing
\[
\sX = W^{k,p}(M;f_\infty^*TN) \quad\text{and}\quad \tilde\sX = W^{k-2,p}(M;f_\infty^*TN).
\]
Theorem \ref{thm:Lojasiewicz-Simon_gradient_inequality_energy_functional_Holder} follows from Theorem \ref{thm:Lojasiewicz-Simon_gradient_inequality_energy_functional_Sobolev} when one can choose $k\geq 1$ and $p \in (1,\infty)$ with $kp>d$ so that there are continuous Sobolev embeddings, $C^{2,\lambda}(M;f_\infty^*TN) \subset W^{k,p}(M;f_\infty^*TN)$, and $L^2(M;f_\infty^*TN) \subset W^{k-2,p}(M;f_\infty^*TN)$. For example, if $d=2$, then $k=p=2$ will do. For $d\geq 2$, we may choose $k=1$ and $d<p<\infty$ provided $L^2(M;\RR) \subset W^{-1,p}(M;\RR)$ is a continuous embedding or, equivalently, $W^{1,p'}(M;\RR) \subset L^2(M;\RR)$ is a continuous embedding, where $p'=p/(p-1)$. According to \cite[Theorem 4.12]{AdamsFournier} when $1\leq p'<d$, the latter embedding is continuous if $(p')^* = dp'/(d-p') = dp/(d(p-1)-p) \geq 2$. But we must choose $p>d$ when $k=1$ in Theorem \ref{thm:Lojasiewicz-Simon_gradient_inequality_energy_functional_Sobolev} and if $p=d$, then $dp/(d(p-1)-p) = d^2/(d(d-1)-d) = d/(d-2) \geq 2$ implies $d \geq 2d-4$ or $d \leq 4$. Hence, Theorem \ref{thm:Lojasiewicz-Simon_gradient_inequality_energy_functional_Sobolev} implies Theorem \ref{thm:Lojasiewicz-Simon_gradient_inequality_energy_functional_Holder} when $d=2,3$ (the case $d=4$ is excluded since $p>d$ leads to $d<4$ in the preceding inequalities). For arbitrary $d \geq 2$, another abstract {\L}ojasiewicz--Simon gradient inequality \cite[Theorem 2.4.2 (i)]{Huang_2006} due to Huang implies Theorem \ref{thm:Lojasiewicz-Simon_gradient_inequality_energy_functional_Holder} with the choices
\[
\sX = C^{2,\lambda}(M;f_\infty^*TN), \quad \tilde\sX = C^\lambda(M;f_\infty^*TN), \quad \quad \sH = L^2(M;f_\infty^*TN),
\]
and $\sH_\sA = W^{2,2}(M;f_\infty^*TN)$ with $\sA = \Delta_g + 1$ in \cite[Hypotheses (H1)--(H3), pages 34--35]{Huang_2006}. We refer the reader to \cite{Feehan_Maridakis_Lojasiewicz-Simon_coupled_Yang-Mills_v4} for an exposition of Huang's \cite[Theorem 2.4.2 (i)]{Huang_2006}.

Alternatively, our \cite[Theorem 3]{Feehan_Maridakis_Lojasiewicz-Simon_harmonic_maps_v6} implies Theorem \ref{thm:Lojasiewicz-Simon_gradient_inequality_energy_functional_Holder}, as we show in the proof of \cite[Corollary 6]{Feehan_Maridakis_Lojasiewicz-Simon_harmonic_maps_v6}.

\section{\Apriori estimate for the difference of two harmonic maps}
\label{sec:Apriori_estimate_difference_two_harmonic_maps}
In this section, we give two proofs of Theorem \ref{mainthm:Relative_energy_gap_harmonic_maps}, based on Theorems \ref{thm:Lojasiewicz-Simon_gradient_inequality_energy_functional_Sobolev} and \ref{thm:Lojasiewicz-Simon_gradient_inequality_energy_functional_Holder}, respectively. We begin with the

\begin{lem}[\Apriori $W^{2,p}$ estimate for the difference of two harmonic maps]
\label{lem:Apriori_W2p_estimate_difference_two_harmonic_maps}
Let $(M,g)$ be a closed Riemann surface, $(N,h)$ a closed, smooth Riemannian manifold, and $p \in (1, 2]$ a constant. Then there is a constant $C=C(g,h,p) \in [1,\infty)$ with the following significance. If $f, f_\infty \in C^\infty(M; N)$ are harmonic maps and $q = 2p/(2-p) \in (2,\infty]$, then
\begin{equation}
\label{eq:Apriori_W2p_estimate_difference_two_harmonic_maps}
\|f-f_\infty\|_{W^{2,p}(M;\RR^n)} \leq C\left(\|df\|_{L^q(M;\RR^n)} + \|df_\infty\|_{L^q(M;\RR^n)} + 1\right)\|f - f_\infty\|_{W^{1,2}(M;\RR^n)}.
\end{equation}
\end{lem}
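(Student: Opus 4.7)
My plan is to combine standard $L^p$-elliptic regularity for the Laplace--Beltrami operator on the closed surface $M$ with a pointwise algebraic identity for $\Delta_g(f - f_\infty)$ derived from the harmonic map equation, estimating the resulting terms via H\"older's inequality at the Sobolev-dual exponent $q = 2p/(2-p)$ and the two-dimensional Sobolev embedding $W^{1,2}(M) \embed L^r(M)$, valid for every finite $r$.

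Setting $u := f - f_\infty$, elliptic regularity on the closed manifold $M$ reads
\[
\|u\|_{W^{2,p}(M;\RR^n)} \leq C\bigl(\|\Delta_g u\|_{L^p(M;\RR^n)} + \|u\|_{L^p(M;\RR^n)}\bigr),
\]
with $C = C(g,p)$, and since $M$ has finite measure, $\|u\|_{L^p} \leq C\|u\|_{L^2} \leq C\|u\|_{W^{1,2}}$, which already fits the right-hand side of \eqref{eq:Apriori_W2p_estimate_difference_two_harmonic_maps} via the additive $1$. So the heart of the problem is to bound $\|\Delta_g u\|_{L^p}$. The harmonic map equation \eqref{eq:Harmonic_map_equation} for both $f$ and $f_\infty$, combined with the symmetry and bilinearity of $A_h(y)(\cdot,\cdot)$ in its last two arguments, yields the decomposition
\[
\Delta_g u = A_h(f)\bigl(df - df_\infty,\ df + df_\infty\bigr) + \bigl[A_h(f) - A_h(f_\infty)\bigr](df_\infty, df_\infty).
\]

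For the first summand, the defining identity $\tfrac{1}{2} + \tfrac{1}{q} = \tfrac{1}{p}$ and the uniform bound on the ambient extension of $A_h$ give, by H\"older's inequality,
\[
\bigl\|A_h(f)(df - df_\infty,\ df + df_\infty)\bigr\|_{L^p} \leq C\bigl(\|df\|_{L^q} + \|df_\infty\|_{L^q}\bigr)\|df - df_\infty\|_{L^2} \leq C\bigl(\|df\|_{L^q} + \|df_\infty\|_{L^q}\bigr)\|u\|_{W^{1,2}},
\]
which already has exactly the structure of the claimed inequality. For the second summand, the Lipschitz bound $|A_h(f) - A_h(f_\infty)| \leq C|u|$ on the compact image of $f, f_\infty$ in $\RR^n$, together with H\"older's inequality at the exponents $(p/(p-1), q, q)$ (which obey $\tfrac{p-1}{p} + \tfrac{2}{q} = \tfrac{1}{p}$), and the continuous embedding $W^{1,2}(M) \embed L^{p/(p-1)}(M)$ (valid in dimension two since $p/(p-1)$ is finite for $p > 1$), produce a bound of the form $C\|u\|_{W^{1,2}}\|df_\infty\|_{L^q}^2$.

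The main obstacle is thus to fit the quadratic factor $\|df_\infty\|_{L^q}^2$ into the asserted linear factor $\|df\|_{L^q} + \|df_\infty\|_{L^q} + 1$. The resolution exploits the uniform pointwise bound $|u| = |f - f_\infty| \leq \diam(N) < \infty$ (since $N$ is compact), which allows one to interpolate $\|u\|_{L^{p/(p-1)}}$ between $\|u\|_{L^\infty}$ and $\|u\|_{L^2}$, and then to apply Young's inequality $ab \leq \tfrac{1}{2}(a^2 + b^2)$ to absorb the residual $\|df_\infty\|_{L^q}$ factor into the additive $1$ at the cost of enlarging $C$. Assembling the two summand estimates with the elliptic bound then produces \eqref{eq:Apriori_W2p_estimate_difference_two_harmonic_maps}.
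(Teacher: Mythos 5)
Your handling of the main term reproduces the paper's argument exactly: subtract the two harmonic map equations, estimate the bilinear term $A_h(f)(df-df_\infty, df+df_\infty)$ by H\"older with $\tfrac{1}{p}=\tfrac{1}{2}+\tfrac{1}{q}$, invoke the interior/global $W^{2,p}$ elliptic estimate for $\Delta_g$ on the closed surface, and absorb $\|f-f_\infty\|_{L^p}\leq C\|f-f_\infty\|_{W^{1,2}}$ into the additive $1$. Where you diverge is the extra summand $[A_h(f)-A_h(f_\infty)](df_\infty,df_\infty)$: the paper's difference equation \eqref{eq:Harmonic_map_difference_equation} suppresses the dependence of the second fundamental form on the base point and so this term never appears in its proof; noticing it is to your credit, and your H\"older bookkeeping for it (exponents $p/(p-1),q,q$ and the embedding $W^{1,2}(M)\embed L^{p/(p-1)}(M)$) is correct as far as it goes.

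The genuine gap is the final absorption step. Young's inequality $ab\leq\tfrac{1}{2}(a^2+b^2)$ bounds a product by squares; what you actually need is the reverse implication $\|df_\infty\|_{L^q}^2\leq C\left(\|df_\infty\|_{L^q}+1\right)$, which fails for large $\|df_\infty\|_{L^q}$, and no bound $\|df_\infty\|_{L^q}\leq C(g,h,q)$ is available since the lemma's constant is asserted to be independent of $f_\infty$ and harmonic maps of arbitrarily large energy (hence arbitrarily large $\|df_\infty\|_{L^q}$) exist. Moreover, the interpolation $\|u\|_{L^{p/(p-1)}}\leq\|u\|_{L^\infty}^{1-\theta}\|u\|_{L^2}^{\theta}$ with $\theta=2(p-1)/p<1$ for $p<2$ replaces the factor $\|u\|_{W^{1,2}}$ by the sublinear power $\|u\|_{W^{1,2}}^{\theta}$, which forfeits precisely the linear dependence on $\|f-f_\infty\|_{W^{1,2}}$ that the statement asserts. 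So as written the quadratic factor is not eliminated. The honest repairs are either to record the estimate with prefactor $\|df\|_{L^q}+\|df_\infty\|_{L^q}+\|df_\infty\|_{L^q}^2+1$, or to allow $C=C(f_\infty,g,h,p)$; both are just as serviceable downstream, since Lemma \ref{lem:Apriori_Wkp_estimate_difference_two_harmonic_maps} and the proofs of the main theorem already use constants depending on $f_\infty$. (The paper obtains the stated $f_\infty$-independent constant only because its difference equation omits the base-point variation of $A_h$ altogether.)
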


\begin{proof}
Because $f$ and $f_\infty$ are harmonic, equation \eqref{eq:Harmonic_map_equation} implies that
\begin{align*}
\Delta_g f - A_h(df,df) &= 0,
\\
\Delta_g f_\infty - A_h(df_\infty,df_\infty) &= 0,
\end{align*}
and therefore,
\begin{equation}
\label{eq:Harmonic_map_difference_equation}
\Delta_g (f-f_\infty) - A_h(df,d(f-f_\infty)) - A_h(d(f-f_\infty),df_\infty) = 0.
\end{equation}
Because $1/p = 1/2 + 1/q$ by hypothesis, the preceding equality yields the estimate,
\[
\|\Delta_g (f-f_\infty)\|_{L^p(M;\RR^n)} \leq C\left(\|df\|_{L^q(M;\RR^n)} + \|df_\infty\|_{L^q(M;\RR^n)} \right) \|d(f-f_\infty)\|_{L^2(M;\RR^n)},
\]
with $C = C(h) \in [1,\infty)$. The standard \apriori $W^{2,p}$ estimate for an elliptic, linear, scalar, second-order partial differential operator over a bounded domain in Euclidean space \cite[Theorem 9.13]{GilbargTrudinger} yields the bound,
\[
\|f-f_\infty\|_{W^{2,p}(M;\RR^n)} \leq C\left(\|\Delta_g (f-f_\infty)\|_{L^p(M;\RR^n)} + \|f-f_\infty\|_{L^p(M;\RR^n)} \right),
\]
for a constant $C=C(g,p) \in [1,\infty)$. Combining the preceding two inequalities gives
\begin{multline*}
\|f-f_\infty\|_{W^{2,p}(M;\RR^n)}
\leq
C\left(\|df\|_{L^q(M;\RR^n)} + \|df_\infty\|_{L^q(M;\RR^n)} \right) \|d(f-f_\infty)\|_{L^2(M;\RR^n)}
\\
+ C\|f-f_\infty\|_{L^p(M;\RR^n)},
\end{multline*}
for $C=C(g,h,p) \in [1,\infty)$. Since $p\leq 2$, this yields the desired estimate.
\end{proof}

\begin{lem}[\Apriori $W^{1,q}$ estimate for a harmonic map]
\label{lem:Apriori_W1q_estimate_harmonic_map}
Let $(M,g)$ be a closed Riemann surface, $(N,h)$ a closed, smooth Riemannian manifold, and $q \in (2,\infty)$ a constant. Then there is a constant $\eps=\eps(g,h,q) \in (0,1]$ with the following significance. If $f, f_\infty \in C^\infty(M; N)$ are harmonic maps obeying \eqref{eq:Relative_harmonic_map_energy_bound}, then
\begin{equation}
\label{eq:Apriori_W1q_estimate_harmonic_map}
\|f\|_{W^{1,q}(M;\RR^n)} \leq 1 + 3\|f_\infty\|_{W^{1,q}(M;\RR^n)}.
\end{equation}
\end{lem}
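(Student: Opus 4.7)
The plan is to couple Lemma \ref{lem:Apriori_W2p_estimate_difference_two_harmonic_maps} with a two-dimensional Sobolev embedding chosen so that the target space is exactly $W^{1,q}$, and then absorb the $f$-dependent term on the right-hand side into the left using the smallness hypothesis \eqref{eq:Relative_harmonic_map_energy_bound}.

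Concretely, given $q \in (2,\infty)$, set $p := 2q/(q+2)$, so that $p \in (1,2)$ and $2p/(2-p) = q$. By Lemma \ref{lem:Apriori_W2p_estimate_difference_two_harmonic_maps} there is $C_1=C_1(g,h,q) \in [1,\infty)$ with
\[
\|f-f_\infty\|_{W^{2,p}(M;\RR^n)}
\leq
C_1\bigl(\|df\|_{L^q(M;\RR^n)} + \|df_\infty\|_{L^q(M;\RR^n)} + 1\bigr)\,\|f - f_\infty\|_{W^{1,2}(M;\RR^n)}.
\]
Since $\dim M = 2$ and $1/q = 1/p - 1/2$, the Sobolev embedding theorem gives a continuous embedding $W^{2,p}(M;\RR^n) \embed W^{1,q}(M;\RR^n)$ with embedding constant $C_2 = C_2(g,p) = C_2(g,q)$. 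Combining and invoking the hypothesis $\|f-f_\infty\|_{W^{1,2}(M;\RR^n)} < \eps$, one obtains
\[
\|f-f_\infty\|_{W^{1,q}(M;\RR^n)}
\leq
C\eps\bigl(\|f\|_{W^{1,q}(M;\RR^n)} + \|f_\infty\|_{W^{1,q}(M;\RR^n)} + 1\bigr),
\]
where $C = C_1 C_2 (g,h,q)$ and where we have bounded $\|df\|_{L^q} \leq \|f\|_{W^{1,q}}$ and similarly for $f_\infty$.

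Now write $X := \|f\|_{W^{1,q}(M;\RR^n)}$ and $X_\infty := \|f_\infty\|_{W^{1,q}(M;\RR^n)}$. The triangle inequality $X \leq X_\infty + \|f-f_\infty\|_{W^{1,q}(M;\RR^n)}$ combined with the previous display yields
\[
X \leq X_\infty + C\eps(X + X_\infty + 1),
\]
so that $(1-C\eps) X \leq (1+C\eps) X_\infty + C\eps$. Choosing $\eps = \eps(g,h,q) \in (0,1]$ small enough that $C\eps \leq 1/2$ forces
\[
X \leq \frac{1+C\eps}{1-C\eps}\,X_\infty + \frac{C\eps}{1-C\eps} \leq 3 X_\infty + 1,
\]
which is precisely \eqref{eq:Apriori_W1q_estimate_harmonic_map}.

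There is no serious obstacle here; the only delicate point is checking that the exponent $p = 2q/(q+2)$ simultaneously falls in the range $(1,2]$ required by Lemma \ref{lem:Apriori_W2p_estimate_difference_two_harmonic_maps}, realizes the critical two-dimensional Sobolev embedding $W^{2,p} \embed W^{1,q}$, and allows the $\|f\|_{W^{1,q}}$ term on the right to be absorbed on the left. Note that $\eps$ depends only on $g$, $h$, $q$ through the constants $C_1$ and $C_2$, in keeping with the statement of the lemma.
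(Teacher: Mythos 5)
Your proof is correct and follows essentially the same route as the paper: define $p$ by $2p/(2-p)=q$, apply Lemma \ref{lem:Apriori_W2p_estimate_difference_two_harmonic_maps}, use the two-dimensional Sobolev embedding $W^{2,p}(M;\RR^n)\subset W^{1,q}(M;\RR^n)$, and absorb the $\|df\|_{L^q}$ term via the smallness hypothesis \eqref{eq:Relative_harmonic_map_energy_bound} with $C\eps\leq 1/2$. The only difference is cosmetic: you carry out the absorption explicitly through the factor $(1-C\eps)$, whereas the paper performs the same rearrangement directly.
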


\begin{proof}
For $p\in (1,2)$ defined by $p^* := 2p/(2-p) = q$, we observe that $W^{1,p}(M;\RR) \subset L^q(M;\RR)$ is a continuous Sobolev embedding by \cite[Theorem 4.12]{AdamsFournier}. Hence, the estimate \eqref{eq:Apriori_W2p_estimate_difference_two_harmonic_maps} yields
\begin{align*}
\|f-f_\infty\|_{W^{1,q}(M;\RR^n)}
&\leq
C\|f-f_\infty\|_{W^{2,p}(M;\RR^n)}
\\
&\leq C\left(\|df\|_{L^q(M;\RR^n)} + \|df_\infty\|_{L^q(M;\RR^n)} + 1\right)\|f - f_\infty\|_{W^{1,2}(M;\RR^n)}.
\end{align*}
Therefore,
\begin{align*}
\|f\|_{W^{1,q}(M;\RR^n)}
&\leq
\|f-f_\infty\|_{W^{1,q}(M;\RR^n)} + \|f_\infty\|_{W^{1,q}(M;\RR^n)}
\\
&\leq C\|df\|_{L^q(M;\RR^n)}\|f - f_\infty\|_{W^{1,2}(M;\RR^n)}
\\
&\quad + C\left(\|df_\infty\|_{L^q(M;\RR^n)}+1\right)\|f - f_\infty\|_{W^{1,2}(M;\RR^n)} + \|f_\infty\|_{W^{1,q}(M;\RR^n)}.
\end{align*}
Choosing $\eps = \eps (g,h,q) \leq 1/(2C)$ in \eqref{eq:Relative_harmonic_map_energy_bound} and applying rearrangement in the preceding inequality yields
\[
\|f\|_{W^{1,q}(M;\RR^n)}
\leq \|df_\infty\|_{L^q(M;\RR^n)} + 1 + 2\|f_\infty\|_{W^{1,q}(M;\RR^n)},
\]
as desired.
\end{proof}

It remains to complete the

\begin{proof}[Proof of Theorem \ref{mainthm:Relative_energy_gap_harmonic_maps} using Theorem \ref{thm:Lojasiewicz-Simon_gradient_inequality_energy_functional_Sobolev}]
Combining the inequalities \eqref{eq:Apriori_W2p_estimate_difference_two_harmonic_maps} and \eqref{eq:Apriori_W1q_estimate_harmonic_map} yields
\[
\|f-f_\infty\|_{W^{2,p}(M;\RR^n)} \leq C\left(1+\|f_\infty\|_{W^{1,q}(M;\RR^n)}\right)\|f - f_\infty\|_{W^{1,2}(M;\RR^n)}.
\]
We now fix $p \in (1,2)$ and $q=2p/(2-p)$ (say with $p=3/2$) and choose $\eps = \eps(f_\infty,g,h) \in (0,1]$ in \eqref{eq:Relative_harmonic_map_energy_bound} small enough that
\[
\eps C\left(1+\|f_\infty\|_{W^{1,q}(M;\RR^n)}\right) \leq \sigma,
\]
where the constant $\sigma \in (0,1]$ is as in Theorem \ref{thm:Lojasiewicz-Simon_gradient_inequality_energy_functional_Sobolev}. Consequently,
\[
\|f-f_\infty\|_{W^{2,p}(M;\RR^n)} < \sigma,
\]
and the hypothesis \eqref{eq:Lojasiewicz-Simon_gradient_inequality_harmonic_map_neighborhood_Sobolev} is satisfied. The {\L}ojasiewicz--Simon gradient inequality \eqref{eq:Lojasiewicz-Simon_gradient_inequality_harmonic_map_energy_functional_Sobolev} (with $d=k=2$) in Theorem \ref{thm:Lojasiewicz-Simon_gradient_inequality_energy_functional_Sobolev} therefore yields
\[
\|\sM(f)\|_{L^p(M;f^*TN)}
\geq
Z|\sE(f) - \sE(f_\infty)|^\theta.
\]
But $\sM(f)=0$ since $f$ is harmonic and thus $\sE(f) = \sE(f_\infty)$.
\end{proof}

It is possible to give an alternative proof of Theorem \ref{mainthm:Relative_energy_gap_harmonic_maps} using Theorem \ref{thm:Lojasiewicz-Simon_gradient_inequality_energy_functional_Holder} with the aid of an elliptic bootstrapping argument to fulfill the stronger hypothesis \eqref{eq:Lojasiewicz-Simon_gradient_inequality_harmonic_map_neighborhood_Holder}. We first observe that Lemma \ref{lem:Apriori_W2p_estimate_difference_two_harmonic_maps} can be strengthened to give

\begin{lem}[\Apriori $W^{k,p}$ estimate for the difference of two harmonic maps]
\label{lem:Apriori_Wkp_estimate_difference_two_harmonic_maps}
Let $(M,g)$ be a closed Riemann surface, $(N,h)$ a closed, smooth Riemannian manifold, $p \in (1, \infty)$ a constant, $k\geq 2$ an integer, and $f_\infty \in C^\infty(M; N)$ a harmonic map. Then there are constants $\eps=\eps(f_\infty,g,h,k,p) \in (0,1]$ and $C=C(f_\infty,g,h,k,p) \in [1,\infty)$ with the following significance. If $f \in C^\infty(M; N)$ is a harmonic map that obeys \eqref{eq:Relative_harmonic_map_energy_bound}, then
\begin{equation}
\label{eq:Apriori_Wkp_estimate_difference_two_harmonic_maps}
\|f-f_\infty\|_{W^{k,p}(M;\RR^n)} \leq C\|f - f_\infty\|_{W^{1,2}(M;\RR^n)}.
\end{equation}
\end{lem}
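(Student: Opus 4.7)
The plan is a three-stage elliptic bootstrap, built on the symmetrized form of the harmonic map difference equation \eqref{eq:Harmonic_map_difference_equation},
\[
\Delta_g(f-f_\infty) = A_h(f)(df+df_\infty, d(f-f_\infty)) + [A_h(f) - A_h(f_\infty)](df_\infty, df_\infty),
\]
obtained using the symmetry of the second fundamental form and the pointwise bound $|A_h(f) - A_h(f_\infty)| \leq C|f-f_\infty|$, valid because $A_h$ is smooth on $N$ and both $f, f_\infty$ take values in the compact submanifold $N \subset \RR^n$.

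First I would establish the base case $k=2$, $p_0 = 3/2$. After shrinking $\eps$ so that Lemma \ref{lem:Apriori_W1q_estimate_harmonic_map} applies at $q_0 = 2p_0/(2-p_0) = 6$, the conjunction of Lemmas \ref{lem:Apriori_W2p_estimate_difference_two_harmonic_maps} and \ref{lem:Apriori_W1q_estimate_harmonic_map} yields $\|f-f_\infty\|_{W^{2, 3/2}(M;\RR^n)} \leq C\|f-f_\infty\|_{W^{1,2}(M;\RR^n)}$. The two-dimensional Sobolev embedding $W^{2, 3/2}(M) \hookrightarrow W^{1, 6}(M) \cap C^{0, 2/3}(M)$ then controls $\|d(f-f_\infty)\|_{L^6}$ and $\|f-f_\infty\|_{L^\infty}$ by $C\|f-f_\infty\|_{W^{1,2}}$.

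Next I would upgrade to $W^{2, p}$ for every $p \in (1, \infty)$. Applying H\"older's inequality with $1/p = 1/q + 1/r$ to the displayed difference equation bounds $\|\Delta_g(f-f_\infty)\|_{L^p}$ by $C(\|df\|_{L^q} + \|df_\infty\|_{L^q})\|d(f-f_\infty)\|_{L^r}$ plus the lower-order remainder $C\|f-f_\infty\|_{L^\infty}\|df_\infty\|_{L^{2p}}^2$. Taking $r = 6$ from the base case, invoking Lemma \ref{lem:Apriori_W1q_estimate_harmonic_map} at arbitrarily large $q$ (each choice possibly shrinking $\eps$ further), and applying the standard elliptic estimate $\|u\|_{W^{2,p}} \leq C(\|\Delta_g u\|_{L^p} + \|u\|_{L^p})$ gives $\|f-f_\infty\|_{W^{2, p}} \leq C\|f-f_\infty\|_{W^{1,2}}$ for every $p < 6$. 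Selecting any $p \in (2, 6)$ and using the embedding $W^{2, p}(M) \hookrightarrow W^{1, \infty}(M)$ available in dimension two provides $\|d(f-f_\infty)\|_{L^\infty} \leq C\|f-f_\infty\|_{W^{1,2}}$; reinserting $r = \infty$ then extends the $W^{2, p}$ bound to every $p \in (1, \infty)$.

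Finally I would induct on $k \geq 3$. Assuming the claim at level $k-1$ for every $p \in (1, \infty)$ and working with some $p > 2$, the Banach-algebra property of $W^{k-1, p}(M)$ in dimension two together with standard Sobolev composition estimates for the smooth map $A_h$ bounds $\|A_h(f) - A_h(f_\infty)\|_{W^{k-1, p}}$ by $C\|f-f_\infty\|_{W^{k-1, p}}$. Differentiating the displayed difference equation $k-2$ times and applying these product and composition bounds, the inductive hypothesis, and smoothness of $f_\infty$ yields the $W^{k-2, p}$ estimate on $\Delta_g(f-f_\infty)$, and the standard $W^{k, p}$ elliptic estimate closes the induction. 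The main obstacle is the middle stage, which must re-invoke Lemma \ref{lem:Apriori_W1q_estimate_harmonic_map} at arbitrarily large $q$ (each forcing a smaller $\eps$) and exploits the conformally critical fact that $W^{2, p}(M)$ just barely embeds into $W^{1, \infty}(M)$ for $p > 2$ in dimension two; once $L^\infty$ control on $d(f-f_\infty)$ is in hand, the induction is routine.
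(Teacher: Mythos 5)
Your proposal is correct and follows essentially the same route as the paper: the base case ($k=2$, small $p$) comes from combining Lemmas \ref{lem:Apriori_W2p_estimate_difference_two_harmonic_maps} and \ref{lem:Apriori_W1q_estimate_harmonic_map}, and the higher cases from differentiating the difference equation \eqref{eq:Harmonic_map_difference_equation} and standard elliptic bootstrapping, which the paper leaves as a two-line sketch and you carry out in detail. Your intermediate stage upgrading to all $p\in(1,\infty)$ at $k=2$ via the $W^{1,\infty}$ bound, and your explicit handling of the term $[A_h(f)-A_h(f_\infty)](df_\infty,df_\infty)$ arising from the base-point dependence of the second fundamental form (which the paper's displayed difference equation suppresses), are sound elaborations of that sketch rather than a different method.
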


\begin{proof}
For $k=2$ and $p \in (1,2]$, the conclusion follows by combining \eqref{eq:Apriori_W2p_estimate_difference_two_harmonic_maps} and \eqref{eq:Apriori_W1q_estimate_harmonic_map}. For $k\geq 3$ and $p \in (1,\infty)$, the conclusion follows by taking derivatives of \eqref{eq:Harmonic_map_difference_equation} and applying a standard elliptic bootstrapping argument.
\end{proof}

We can now give the

\begin{proof}[Proof of Theorem \ref{mainthm:Relative_energy_gap_harmonic_maps} using Theorem \ref{thm:Lojasiewicz-Simon_gradient_inequality_energy_functional_Holder}]
For $p\in (1,\infty)$ and $\lambda \in (0,1)$ and large enough $k = k(g,p,\lambda) \geq 2$, there is a continuous Sobolev embedding, $W^{k,p}(M;\RR^n) \subset C^{2,\lambda}(M;\RR^n)$, and thus a constant $C = C(g,k,p,\lambda) \in [1,\infty)$ such that
\[
\|f-f_\infty\|_{C^{2,\lambda}(M;\RR^n)} \leq C\|f - f_\infty\|_{W^{k,p}(M;\RR^n)}.
\]
Combining the preceding inequality with \eqref{eq:Apriori_Wkp_estimate_difference_two_harmonic_maps} yields the bound
\[
\|f-f_\infty\|_{C^{2,\lambda}(M;\RR^n)} \leq C\|f - f_\infty\|_{W^{1,2}(M;\RR^n)},
\]
for a constant $C = C(f_\infty,g,h,k,p,\lambda) \in [1,\infty)$ .

We now fix $k,p,\lambda$ and choose $\eps = \eps(f_\infty,g,h) \in (0,1]$ in \eqref{eq:Relative_harmonic_map_energy_bound} small enough that $C\eps \leq \sigma$, where the constant $\sigma \in (0,1]$ is as in Theorem \ref{thm:Lojasiewicz-Simon_gradient_inequality_energy_functional_Holder}. Consequently,
\[
\|f-f_\infty\|_{C^{2,\lambda}(M;\RR^n)} < \sigma,
\]
and the hypothesis \eqref{eq:Lojasiewicz-Simon_gradient_inequality_harmonic_map_neighborhood_Holder} is satisfied. The {\L}ojasiewicz--Simon gradient inequality \eqref{eq:Lojasiewicz-Simon_gradient_inequality_harmonic_map_energy_functional_Holder} in Theorem \ref{thm:Lojasiewicz-Simon_gradient_inequality_energy_functional_Holder} therefore yields
\[
\|\sM(f)\|_{L^2(M;f^*TN)}
\geq
Z|\sE(f) - \sE(f_\infty)|^\theta.
\]
Again, $\sM(f)=0$ since $f$ is harmonic and thus $\sE(f) = \sE(f_\infty)$.
\end{proof}

%
%

\bibliography{master,mfpde}
\bibliographystyle{amsplain}

\end{document}